\title{Omnibus Sequences, Coupon Collection, and Missing Word Counts}
\author{\small Sunil Abraham \\
\small Oxford University \\
\small{\tt s.abraham@gmail.com} \and
\small Greg Brockman\\
\small Harvard University\\
\small {\tt gregory.brockman@gmail.com}\and
\small Stephanie Sapp \\
\small The Johns Hopkins University \\
\small {\tt sapp.stephanie@gmail.com}\and
\small{Anant P.~Godbole}\\
\small{East Tennessee State University}\\
\small{\tt godbolea@etsu.edu} }
\newtheorem{theorem}{Theorem}[section]
\newtheorem{lemma}[theorem]{Lemma}
\def\e{\mathbb E}
\def\a{\alpha}
\def\v{\mathbb V}
\def\ve{\varepsilon}
\def\vp{\varphi}
\def\p{\mathbb P}
\def\lr{\left(}
\def\rr{\right)}
\def\l{\lambda}
\begin{document}
\maketitle
\noindent Definition {\it Omnibus; adjective:
``Including many things or having a variety of purposes or uses."}  SHORT USAGE: {\it Omni}.

\section{Introduction}  The English translation of Leo Tolstoy's novel \emph{War and Peace} has the following notable property: it contains this paragraph as a subsequence.  If one were to write the letters and spaces that appear in the book as a string, then there would be a subsequence of the string that is identical to the string of letters and spaces in this paragraph.  The full property is more general than that -- \emph{War and Peace} contains as a subsequence \emph{any} possible string of up to 950 letters and spaces (the \TeX\ code for this paragraph has 866 characters).  That includes valid English text such as the first 950 characters of President Obama's Inaugural Address, as well as a string of 950 ``$q$''s.
\emph{War and Peace} is thus  a tome that is {\it 950-omnibus (or 950-omni)} over the 27 character alphabet $\{a, b, c,\ldots,z, SPACE\}$.  

Of course, such a text is not at all hard  to create by design.  Consider writing the string ``$abcd\ldots xyz\enspace\enspace$'' 950 times.  Clearly one could then find as a subsequence any possible string of length at most 950.  However, it seems difficult to arrive upon such a string purely by chance.  

In this paper, we will study properties of $k$-omni strings over an
alphabet of size $a$.  There are thus three areas a researcher might
pursue (i) literature and author comparisons, disputed authorship
etc. (comparative literature); (ii) constructions (combinatorics); and
(iii) behavior of random strings (statistics,
probability).  We will focus on
(iii), since (i) is best left to others, and (ii) is trivial: The
shortest string that contains all the $a^k$ words over an $a$ letter
alphabet is of length $ak$; simply write the alphabet $k$ times back
to back as done above and note that an string of length $\le ak-1$
necessarily contains a letter $\xi$ that is represented at most $k-1$
times, making the $k$-string $\xi\xi\ldots\xi$
impossible to obtain as a subsequence.

This paper is organized as follows. In Section 2, we place related problems -- in which occurrences of word segments must be in a substring -- in context.  Section 3 explores connections between omnisequences and the coupon collector problem; the key link between the two is given by what we term the ``waddle lemma."  Section 4 focuses on deriving conditions under which a random sequence is almost never or almost always $k$-omni.  Additionally, we compute exact probabilities for a sequence to be $k$-omni when its length is {\it at} the threshold value.  Section 5 is devoted to a review of some of the deeper properties of coupon collection.  We continue, in Section 6, by deriving a ``zero-infinity" threshold for the expected number of missing $k$-sequences, uncovering the fact that this threshold is not the same as that for the emergence of the omni property.  A more detailed analysis is then undertaken.  We end with potential applications in Section 7 and a list of open problems in Section 8.   We certainly raise more questions than we answer, and invite the reader to dig in with gusto into some of these questions.

\section{Universal Cycles and Cover Times}
Our interest in obtaining words as embedded subsequences aside, what can be said if each word must occur as a consecutive {\it string} rather than a scattered subsequence?  In this case, much is known.  The theories of 

(i) Universal cycles (Knuth \cite{kn}, Chung, Diaconis, and Graham \cite{ch}, Hurlbert \cite{hu}, and a Special 2009 Issue of {\it Discrete Mathematics} \cite {dm}); and 

(ii) Cover times, i.e. waiting times until all patterns occur (M\'ori \cite{mo}) 

\noindent are relevant to this question.  The following beautiful result of De Bruijn, the simple proof of which can be found, e.g., in West's graph theory textbook \cite{we}, is the starting point of all investigations on universal cycles, also known as U-cycles:
\begin{theorem}   
(DeBruijn) For each $a$ and $k$ there exists a cyclic sequence of length $a^k$ that contains as a substring {\it each} $k$-letter word over $[a]:=\{1,2,\ldots,a\}$ precisely once. 
\end{theorem}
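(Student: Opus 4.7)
The plan is to prove the theorem by constructing the De Bruijn graph $B(a,k)$ and extracting a universal cycle from an Eulerian circuit in it. Specifically, I would let $B(a,k)$ be the directed multigraph whose vertex set is $[a]^{k-1}$, the set of all $(k-1)$-letter words over $[a]$, and in which we place a directed edge from $x_1 x_2 \cdots x_{k-1}$ to $x_2 \cdots x_{k-1} x_k$ for every letter $x_k \in [a]$. Each edge of $B(a,k)$ is then naturally labeled by the $k$-letter word $x_1 x_2 \cdots x_k$, and there are exactly $a^k$ edges, one for each $k$-letter word over $[a]$.

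Next I would verify the two hypotheses of Euler's theorem for directed graphs: (a) every vertex has equal in-degree and out-degree, and (b) the underlying graph is connected. For (a), the vertex $v = x_1 \cdots x_{k-1}$ has out-degree $a$, one outgoing edge for each choice of appended letter $x_k$, and in-degree $a$, one incoming edge for each choice of prepended letter $x_0$. For (b), given any two vertices $u = u_1 \cdots u_{k-1}$ and $v = v_1 \cdots v_{k-1}$, one can walk from $u$ to $v$ by successively appending $v_1, v_2, \ldots, v_{k-1}$, which traces a directed path of length $k-1$ ending at $v$; hence $B(a,k)$ is strongly connected.

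With (a) and (b) in hand, Euler's theorem guarantees a closed directed circuit that traverses every edge of $B(a,k)$ exactly once. I would then translate this circuit into the desired cyclic string: if the circuit visits vertices $v_0, v_1, \ldots, v_{a^k}=v_0$, then writing down the final letter of each $v_i$ for $i = 1, \ldots, a^k$ produces a cyclic sequence of length $a^k$ in which the $k$-letter substring starting at position $i$ is precisely the label of the $i$-th edge traversed. Since each edge is traversed exactly once and edges are in bijection with $k$-letter words, each word over $[a]$ appears as a cyclic substring exactly once.

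The main obstacle is not any single technical step but rather the bookkeeping needed to make the bijection between edges of $B(a,k)$ and $k$-letter words, and between Eulerian circuits and cyclic strings, completely precise; in particular one must be careful that the cyclic wrap-around at the end of the circuit correctly accounts for the $k-1$ words whose occurrences straddle the seam. Assuming Euler's theorem as a black box, the argument is otherwise routine.
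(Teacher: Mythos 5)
Your proposal is correct and is precisely the standard argument the paper defers to: the paper gives no proof of its own but cites West's graph theory text, where the result is obtained exactly as you describe, via an Eulerian circuit in the De Bruijn graph on $[a]^{k-1}$ vertices with edges labeled by $k$-letter words. Your verification of balanced in- and out-degrees and strong connectivity, followed by reading off the last letters along the circuit, is the same routine construction, so there is nothing to add beyond the seam bookkeeping you already flag.
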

The above theorem, exemplified for $k=3, a=2$ by the sequence 11100010,  asserts that a cyclic listing of $k$ letter words on an $a$ letter alphabet can be written down in the most efficient way possible for each $a$ and $k$.  However, omnibus sequences are far shorter, in the minimal case, than are U-cycles, since the former can be of linear length $ak$.

M\'ori \cite{mo} has done extensive work on waiting times until all patterns occur as a string.  Consider first the waiting time for occurrences of a single pattern.  Feller's classic text \cite{fe} examines this question, and the somewhat counterintuitive answer reveals that even 
the {\it expected} waiting time for a single pattern depends on the pattern; in the case of a binary alphabet, for example, the expected waiting time for $HHHHHH$ is $2^1+2^2+2^3+2^4+2^5+2^6=126$, but $HHTTHH$ occurs, on average, after just $2+4+64=70$ flips.  The underlying reason for this is that a pure head run of length six overlaps itself in six ways, but overlaps of $HHTTHH$ with itself can only occur in one, two, or six places.  One of M\'ori's \cite{mo} results on the cover times for all patterns is as follows:
\begin{theorem}
Define T(a) to be the waiting time until each of the $a^k$ patterns of length $k$ over $[a]$ has occurred as a run.  Normalize by setting 
\[Y(a)=\frac{T(a)}{a^k}-k\log a.\]
Then, with $q=\frac{1}{6}\frac{a-1}{a+1}$,
\[\sup_y\vert\p(Y(a)\le y)-F(y)\vert=O(a^{-kq}),\]
as $k\to\infty$, where $F(y)=\exp(-e^{-y})$ is the standard Gumbel distribution function.
\end{theorem}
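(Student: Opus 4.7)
The plan is to realize $\{T(a) \le t\}$ as the event that no pattern is still missing at time $t$, and then to show via Chen-Stein Poisson approximation that the number of missing patterns at time $t_y := \lfloor a^k(k\log a + y)\rfloor$ is asymptotically Poisson with mean $e^{-y}$, whence $\p(T(a) \le t_y) \to \exp(-e^{-y})$ uniformly in $y$. For each pattern $w \in [a]^k$, let $\tau_w$ be the first index at which the length-$k$ substring ending at that position equals $w$, and set $I_w(t) = \mathbf{1}\{\tau_w > t\}$ and $W(t) = \sum_w I_w(t)$. Then $\{T(a)\le t\} = \{W(t) = 0\}$, so it suffices to bound $|\p(W(t_y)=0) - \exp(-e^{-y})|$ by $O(a^{-kq})$ uniformly in $y$.

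The first step is the mean computation $\mu(t_y) := \e W(t_y)$. A standard correlation-polynomial/renewal argument (Feller) shows that for each $w$, $\p(\tau_w > t) = C_w \theta_w^t(1+O(\rho^t))$, with $\theta_w \in (0,1)$ the dominant root of the pattern's autocorrelation polynomial and $\rho < 1$ uniform in $w$. For patterns with no proper self-overlap (the ``generic'' ones, of which there are $a^k(1-O(k/a))$), one has $C_w = 1 + O(a^{-k})$ and $\theta_w = 1 - a^{-k} + O(a^{-2k})$, which yields $\p(\tau_w > t_y) = a^{-k}e^{-y}(1+o(1))$. For self-overlapping patterns one has $\theta_w = 1 - (1-a^{-p})a^{-k}+\cdots$ with $p$ the minimal period; tracking the $a^p$ patterns of each period $p \ge 1$ separately and summing in $p$ shows that their total contribution to $\mu(t_y)$ is $O(a^{-kq'})$ for a $q' \ge q$, so that $\mu(t_y) = e^{-y} + O(a^{-kq})$.

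For the Poisson step I would invoke the Chen-Stein method in the local-dependence form of Arratia, Goldstein, and Gordon, taking the neighborhood $B_w$ of $w$ to consist of all $v$ sharing with $w$ a suffix-prefix overlap of length at least $k - L$, for a tuning parameter $L$ to be chosen; outside $B_w$ the failure events are nearly independent because $w$ and $v$ then compete essentially disjointly for the same positions. One then bounds $b_1 := \sum_w \sum_{v \in B_w} \e I_w \cdot \e I_v$ and $b_2 := \sum_w \sum_{v \in B_w,\, v \ne w} \e(I_w I_v)$. Since $|B_w| \le Ca^L$ and $\e I_w \sim a^{-k}e^{-y}$, one gets $b_1 \le C' a^{L-k}e^{-2y}$, forcing $L$ small relative to $k$. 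The term $b_2$ requires a joint-waiting-time bound for overlapping pairs, furnished by the two-pattern correlation polynomial: $\p(\tau_w > t, \tau_v > t) \le C\theta_{w,v}^t$ with $\theta_{w,v}$ depending on how tightly $w$ and $v$ interlock. Balancing $b_1$ and $b_2$ through an optimal choice of $L$ is what pins down the specific exponent $q = \tfrac{1}{6}(a-1)/(a+1)$.

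The main obstacle is this joint-indicator bound in $b_2$: analyzing how two long patterns that share a common substring can \emph{simultaneously} remain absent from a random string of length $\sim k a^k\log a$ is combinatorially delicate, since one must track the full set of interlocking near-occurrences of both patterns. A clean way to handle this is a decoupling of ``waddle'' type: partition $\{1,\ldots,t_y\}$ into disjoint blocks of length $L$ and replace the sliding window by block-sampled $k$-grams, which are i.i.d.\ uniform on $[a]^k$; the cost of this replacement is itself $O(a^{-kq})$ by a union bound over ``seam'' events in which a pattern occurs across a block boundary. Combining the Chen-Stein inequality $|\p(W(t_y)=0) - e^{-\mu(t_y)}| \le b_1+b_2$ with the mean estimate of the second paragraph and the elementary bound $|e^{-\mu} - \exp(-e^{-y})| \le |\mu - e^{-y}|$ then yields the stated Kolmogorov rate $O(a^{-kq})$ uniformly in $y$.
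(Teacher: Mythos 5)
This statement is M\'ori's theorem, which the paper only quotes from \cite{mo} and does not prove, so there is no in-paper argument to compare against; judged on its own terms, your outline follows the natural Poisson-approximation route but has genuine gaps at exactly the points where the theorem is hard. First, the Chen--Stein step is misapplied: your indicators $I_w(t)=\mathbf{1}\{\tau_w>t\}$ are \emph{global} functionals of the entire string of length $t_y\sim a^k k\log a$, so defining neighborhoods $B_w$ by suffix--prefix overlap does not make $I_w$ independent of the indicators outside $B_w$ --- two patterns that never overlap are still functions of the same letters. The Arratia--Goldstein--Gordon bound therefore carries a third term $b_3$ (the conditional-expectation term measuring this non-local dependence), which in this problem is the dominant difficulty and which your sketch never mentions. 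Second, the proposed repair by block decoupling does not work as described: in the $b_1$ estimate you need $L$ small relative to $k$ (else $a^{L-k}$ is useless), but a block of length $L<k$ contains no $k$-gram at all; and even with blocks of length $L'\ge k$, discarding occurrences that straddle block boundaries is not a rare ``seam'' event correctable by a union bound --- it systematically removes about $(k-1)/L'$ of all occurrence positions for \emph{every} pattern, inflating each missing probability by a factor of roughly $\exp\bigl(k(k-1)\log a/L'\bigr)$ and hence shifting the Poisson mean by far more than $a^{-kq}$ unless $L'$ is taken so large that the locality you need elsewhere is destroyed. The joint behavior of overlapping and non-overlapping patterns over the whole string is precisely where the content of the theorem lies, and the proposal in effect assumes it away.

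Third, the quantitative claim --- the specific Kolmogorov rate $O(a^{-kq})$ with $q=\tfrac16\,\frac{a-1}{a+1}$ --- is never derived: you assert that ``balancing $b_1$ and $b_2$'' will pin down this exponent, but no computation is offered that produces either the factor $\tfrac16$ or the ratio $(a-1)/(a+1)$, and since the whole point of the statement (beyond the Gumbel limit itself) is this rate, that is a gap rather than a routine detail. Two smaller issues: the uniformity in $y$ over all of $\mathbb{R}$ is claimed but not addressed (your bounds carry factors $e^{-y}$ and $e^{-2y}$ that blow up as $y\to-\infty$, so the extreme range of $y$ must be treated separately), and the mean computation, while morally correct, confuses periods with borders: a pattern with minimal period $p$ has $\e(\tau_w)=\sum_i a^{i}$ summed over its border lengths $i$ together with $k$, so the correction to $\theta_w$ is of size $a^{(k-p)-k}=a^{-p}$ relative to $a^{-k}$, not the expression you wrote; the conclusion that the self-overlapping patterns contribute $o(a^{-kq})$ to the mean survives, but only after this bookkeeping is done correctly.
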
  Results on the slickest (and shortest) way of exhibiting all patterns and on average case behavior can thus both be seen to be beautiful and deep. Generalizations to cover times on graphs have been the subject of intense study; see, e.g. the monographs of Aldous \cite{al}, and Aldous and Fill \cite{alfi}. 

Consider another example of U-cycles and omnisequences: A universal cycle of length ${8\choose 3}$ of  3-subsets of $\{1,2,3,4,5,6,7,8\}$ exists; one example is given by
$$1356725\ 6823472\ 3578147\ 8245614\ 5712361\ 2467836\ 7134582\ 4681258,$$ where each block is obtained from the previous one by addition of 5 modulo 8, and the 3-letter ``word" 725, e.g., is interpreted as the 3-subset $\{2,5,7\}$.  The above U-cycle was constructed by Hurlbert \cite{hu} in a paper that studies $k$-subsets of an $n$-set for $k=3,4,5,6$, and shows how difficult the problem is for general $k$ and $n$.  Notorious in this area is the \$100 question of Chung et al.~\cite{ch}:

\medskip

\noindent {\bf Conjecture}  {\it U-cycles of $k$ element subsets of $[n]$ exist for all $k$ if the obvious divisibility condition $n\vert{n\choose k}$ is satisfied, and if $n\ge n_0(k)$ is large enough.}

\medskip

If we were looking for an for an {\it omnibus} listing of $k$-subsets of $[n]$, on the other hand, the solution would be trivial; an obvious shortest listing would be $12\ldots n$. Once again, however, we note that the length of the shortest omni sequence, namely $n$, is shorter than that of a possible shortest representation as a U-cycle, which would be of length ${n\choose k}$.  (To the best of our knowledge, average case behavior has not been studied in this case -- where one would be seeking to evaluate, e.g., $\e(W)$, where $W$ is the waiting time until each of the $k$-subsets occurs as an ordered string.    Average case omni behavior, on the other hand will follow from the discussion in the next section.)

\section {A Twist on Coupon Collection}

\noindent THE CLASSIC COUPON COLLECTOR PROBLEM:   Suppose that a ``coupon collector" wishes to collect one of each of $a$ toys that are found in cereal boxes.  It is well known (see, e.g., Feller\cite{fe})  that she expects to collect $\e(W)=a(1+\frac{1}{2}+\ldots\frac{1}{a}):=aH(1..a)\approx a[\log a+\gamma+o(1)]$ coupons, since the first purchase yields the first new toy; the expected waiting time until the second new toy is purchased is the mean of a geometric random variable with parameter $\frac{a-1}{a}$, which equals $\frac{a}{a-1}$; the third toy takes on average $\frac{a}{a-2}$ new purchases, and so on.  In addition
the variance of the waiting time in the coupon collector problem is given by $\v(W)=(a^2 \sum_{i=1}^{a-1}{1/i^2} -aH(1..a-1))<\frac{\pi^2}{6}a^2$.

It turns out that the omnisequence problem is inextricably linked to the coupon collector problem.  The following key lemma will come as no surprise to {\it cognoscenti}.

\begin{lemma}
{\it (The Basic ``Waddle\footnote{REU groups are close knit social/mathematical entities, and team members often develop their own vernacular.  In 2008, the first named author, for no apparent reason, decided to call 1-omni strings (or completed sets of coupons) {\it waddles}.} Lemma".)}
A sequence $S$ is $k$-omni if and only if there exists a pairwise-disjoint collection $P$ of completed sets of coupons (1-omni substrings of $S$) such that $|P| \geq k$.
\end{lemma}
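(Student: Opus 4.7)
The plan is to prove the two directions separately; the ``if'' direction is essentially by construction, and the ``only if'' direction proceeds by induction on $k$.

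For the ``if'' direction, suppose $P = \{S_1, \ldots, S_k\}$ consists of pairwise disjoint 1-omni substrings of $S$, listed in left-to-right order. Given any target word $w = w_1 \cdots w_k$ over the alphabet, each $S_i$ contains every letter, so we may pick a position $p_i$ inside $S_i$ whose letter equals $w_i$. Disjointness together with the left-to-right ordering of the $S_i$ forces $p_1 < p_2 < \cdots < p_k$, exhibiting $w$ as a subsequence of $S$. Hence $S$ is $k$-omni.

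For the ``only if'' direction, I induct on $k$; the base case $k = 1$ is immediate, since any 1-omni $S$ contains a 1-omni substring (for instance, its shortest 1-omni prefix). For the inductive step, let $S_1$ denote the shortest 1-omni prefix of $S$, with last position $t_1$, and let $\xi$ be the letter appearing at position $t_1$. The engine of the proof is the observation that $\xi$ occurs exactly once within $S_1$, because otherwise the strict prefix ending at $t_1 - 1$ would already contain every letter, contradicting minimality of $S_1$. Let $S'$ be the suffix of $S$ starting immediately after $S_1$; I claim $S'$ is $(k-1)$-omni. Given any word $w = w_1 \cdots w_{k-1}$, form the extended length-$k$ word $\xi w_1 \cdots w_{k-1}$, which embeds into $S$ at some positions $p_0 < p_1 < \cdots < p_{k-1}$ by the $k$-omni hypothesis. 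Because $\xi$ appears only at position $t_1$ within $S_1$, we must have $p_0 \geq t_1$, whence $p_1, \ldots, p_{k-1} > t_1$ all lie inside $S'$, showing $w$ is a subsequence of $S'$. The induction hypothesis applied to $S'$ then supplies $k-1$ disjoint 1-omni substrings, which together with $S_1$ give the desired collection of $k$.

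The only delicate point is the ``only if'' direction: one must trade a single letter of an arbitrary length-$(k-1)$ word against the rigid structure of the greedy prefix $S_1$. Prepending the distinguished letter $\xi$ and exploiting its uniqueness inside $S_1$ is the trick that resolves this, after which the induction closes without further fuss.
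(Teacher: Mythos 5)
Your proof is correct, and for the nontrivial (``only if'') direction it takes a genuinely different route from the paper. The paper argues by contraposition: it takes a maximal collection of $m<k$ disjoint waddles, chosen ``tight'' so that each waddle ends at the first occurrence of its final letter $a_i$, picks a letter $c$ absent from the suffix after the $m$th waddle, and exhibits the explicit word $a_1a_2\ldots a_m c\ldots c$ (with $k-m$ copies of $c$) that cannot be embedded, contradicting $k$-omniness. You instead argue directly by induction on $k$: peel off the shortest $1$-omni prefix $S_1$, note its last letter $\xi$ occurs nowhere earlier in $S_1$ (the same structural fact the paper's ``tightness'' encodes), and prepend $\xi$ to an arbitrary $(k-1)$-word to force the embedding of that word into the suffix, so the suffix is $(k-1)$-omni and induction finishes. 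Your version avoids any appeal to a maximal collection and formalizes the greedy peeling cleanly; the paper's version is shorter and has the side benefit of explicitly producing a missing word when there are too few waddles, a construction it reuses later (e.g.\ the lower bound $a^{k-r-1}$ on missing-word counts in Section 6.2) and which makes the correctness of the greedy algorithm described after the lemma immediate. Both proofs establish the same statement with essentially the same underlying observation about the greedy first waddle.
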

\begin{proof} Sufficiency is easy to establish. Consider necessity.  Suppose there exist $m<k$ pairwise disjoint 1-omni substrings of $S$.  Let these be as ``tight" as possible, so that the last letter in any substring is the first occurrence of that letter.  Let these letters be $a_1,a_2,\ldots,a_m$ and let $a=(a_1a_2\ldots a_mc\ldots c)$, ($k-m$ $c$s), where $c$ is any letter not in the string after the letter $a_m$ in the $m$th string.  Then $a$ is not a subsequence of the string.  Contradiction.\end{proof}

At this juncture, it should be clear how to algorithmically find any given length $k$ string in a $k$-omnisequence $S$.  One can proceed greedily: read the omnisequence from left to right, and when the next desired letter is found, record its position.  The above proof shows that this algorithm will always yield the desired string precisely when $S$ is $k$-omni.

Similarly, we can design a greedy algorithm to determine the maximum $k$ for which a given string $S$ is $k$-omni.  Simply read across $S$ from left to right, recording each time a new 1-omni substring (complete coupon collection) is obtained.  The total number of such substrings will be the desired $k$.  Applying such an algorithm to one of several English translations of \emph{War and Peace}, the second-named author's computer demonstrated the novel to be 950-omni but not 951-omni.

\section{Threshold Behavior and Behavior at the Threshold}
Consider rolling a fair die with $a$ sides and recording the sequence of rolls obtained.  Using the fact that we are looking at {\it disjoint renewals} of the $k$ required waddles, the expected number of rolls $\e(W_{k,a})$ needed before the recorded sequence is $k$-omni on $[a]$ equals $aH(1..a)k$, since the mean waiting time for a single waddle is $aH(1..a)$.  By independence, moreover, $\v( W_{k,a} )=\v( \sum^k_{i=1}W_{1,a} )=\sum^k_{i=1}\v( W_{1,a} )= k(a^2 \sum_{i=1}^{a-1}{1/i^2} -aH(1..a-1))<k\frac{\pi^2}{6}a^2.$
Setting $W=W_{k,a}$ for simplicity, we note that $\v(W)=o(\e(W))^2$, not just for fixed $a$ as $k\to\infty$ and but also in general if at least one of $a,k$ tends to infinity.  This is our signal that $W$ will be tightly concentrated around its mean; Chebychev's inequality easily leads us to the following result:

\begin{theorem} Let $r>0$ be a constant, and fix $a\geq 2$, $n=rk$, where $n,k$ are both integers.  Then
$$\lim_{k \rightarrow\infty} {\mathbb P}({\rm Sequence\ is\ {\it k}-omni}) = \left\{
\begin{array}{ll}
0, & \text{ if $r<a H(1..a)$, or}\\
1, & \text{ if $r>a H(1..a)$}
\end{array}\right.$$
\end{theorem}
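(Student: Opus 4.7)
The plan is to translate the event ``the length-$n$ sequence is $k$-omni'' into a statement about the waiting time $W = W_{k,a}$ and then apply Chebyshev's inequality, exactly as the paragraph preceding the theorem foreshadows. The key identification is this: a random sequence of length $n$ is $k$-omni if and only if $W \leq n$. The forward direction is immediate; the converse is supplied by the Waddle Lemma, which guarantees that collecting $k$ disjoint complete coupon sets in the first $n$ rolls is exactly what it takes to be $k$-omni. So the problem reduces to estimating $\p(W \leq rk)$ as $k \to \infty$.

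Next I would use the expressions already computed: $\e(W) = aH(1..a)\,k$ (by summing $k$ independent single-waddle waiting times) and $\v(W) < k \cdot \frac{\pi^2}{6}a^2$ (by independence across the disjoint renewal windows). Since $a$ is fixed, $\e(W)$ grows linearly in $k$ while $\v(W)$ is also linear in $k$, so $\sqrt{\v(W)}/\e(W) = O(1/\sqrt{k})$, giving the tight concentration the authors mention.

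Now split into the two cases. If $r < aH(1..a)$, set $\delta = aH(1..a) - r > 0$, so that $\e(W) - rk = \delta k$. Then
\[
\p(\text{sequence is $k$-omni}) = \p(W \leq rk) \leq \p(|W - \e(W)| \geq \delta k) \leq \frac{\v(W)}{\delta^2 k^2} = O\!\left(\frac{1}{k}\right),
\]
which vanishes as $k \to \infty$. If $r > aH(1..a)$, set $\delta = r - aH(1..a) > 0$, so that $rk - \e(W) = \delta k$, and the complementary estimate
\[
\p(\text{sequence is not $k$-omni}) = \p(W > rk) \leq \p(|W - \e(W)| \geq \delta k) \leq \frac{\v(W)}{\delta^2 k^2} = O\!\left(\frac{1}{k}\right)
\]
gives the conclusion.

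There is no serious obstacle: all the analytic work has been done in the preceding paragraph, and the theorem is essentially a packaging of Chebyshev's inequality against the linear-in-$k$ bounds for $\e(W)$ and $\v(W)$. The only subtlety worth flagging is the logical equivalence between ``$k$-omni'' and ``$W \leq n$,'' which is why the Waddle Lemma is cited up front — without it, the converse direction (controlling the probability of being $k$-omni from above, rather than controlling the waiting time) would not be automatic.
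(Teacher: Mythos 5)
Your proposal is correct and follows essentially the same route as the paper: identify ``$k$-omni'' with the waiting-time event $W_{k,a}\le n$ via the waddle/renewal decomposition, then apply Chebyshev's inequality using $\e(W)=kaH(1..a)$ and $\v(W)\le k\pi^2a^2/6$. The only difference is cosmetic -- the paper works at the finer scale $n=kaH(1..a)+\varphi(k)\sqrt{k}\,a$ (thus proving a slightly sharper window) and only writes out one side, while you treat both sides directly at $n=rk$, which suffices for the theorem as stated.
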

\begin{proof}  We provide just a proof of the second part of the result; the first is proved similarly.  Let $n=kaH(1..a)+\varphi(k)\sqrt{k}a$, where $\varphi(k)\to\infty$ is any sequence such that $\vp(k)=o(\sqrt{k})$.  In other words, $n$ is smaller than $(1+\ve)aH(1..a)\cdot k$.  We have
\begin{eqnarray*} \p({\rm not\ omni})&=&\p(W>kaH(1..a)+\vp(k)\sqrt{k}a)\\
&\le& \p(W-E(W)\ge\vp(k)\sqrt{k}a)\\
&\le& \frac{\v(W)}{\vp^2(k)ka^2}\\
&\le&\frac{\pi^2}{6\vp^2(k)}
\to0,\end{eqnarray*}
as asserted.  If $a\to\infty$ for fixed length words, the above proof may be modified by letting $n=kaH(1..a)+{a\vp(a)}$ where $\vp(a)\to\infty$ can grow at an arbitrarily slow rate as long as $\vp(a)=o(H(1..a))$. In general, though, we may take $n=(1+\ve)aH(1..a)k$ as long as at least one of $a,k$ tend to infinity.
\end{proof}
We now explore behavior at some threshold values of $n$, e.g. when $n=aH(1..a)k+O(1)$.  Let $P(n,k,a)$ denote the probability that a sequence of length $n$ on an alphabet $[a]$ is $k$-omni, and let $N(n,k,a)$ be the number of $k$-omni sequences of length $n$ on $[a]$.  In the binary case, when $2H(1..2)=3$, we have
\begin{theorem} $P(3k-1,k,2) = \frac{1}{2}$ for each $k$.  Furthermore, for constant $c$, as $k\rightarrow\infty$, $P(3k+c,k,2)\rightarrow\frac{1}{2}$.
\end{theorem}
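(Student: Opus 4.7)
The plan is to translate the problem into a fair-coin calculation using the waddle decomposition, then handle the two halves by symmetry and the central limit theorem respectively. By the waddle lemma and the greedy description following it, a random binary sequence of length $n$ is $k$-omni if and only if the total length $W_1+\cdots+W_k$ of its first $k$ greedily-extracted waddles is at most $n$. In the binary case each waddle has the form $1+G$, where $G$ is the geometric waiting time for the letter opposite to the one that opens the waddle, so $G\sim\mathrm{Geom}(1/2)$ on $\{1,2,\ldots\}$ with $\e G=2$ and $\v G=2$. Since each waddle ends at a stopping time and the underlying letters are i.i.d.~Bernoulli$(1/2)$, the $W_i$ are themselves i.i.d.~by memorylessness. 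Writing $S_k:=\sum_{i=1}^k G_i$, we get $P(n,k,2)=\p(k+S_k\le n)=\p(S_k\le n-k)$, and $S_k$ is interpretable as the number of flips of a fair coin needed to see $k$ tails, with $\e S_k=\v S_k=2k$.

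For the exact identity, $P(3k-1,k,2)=\p(S_k\le 2k-1)$, which is the probability that at least $k$ tails appear among the first $2k-1$ flips of a fair coin. Since $2k-1$ flips cannot contain both at least $k$ heads and at least $k$ tails, the events ``at least $k$ heads'' and ``at least $k$ tails'' partition the sample space; by the symmetry of a fair coin each has probability $\tfrac{1}{2}$, giving $P(3k-1,k,2)=\tfrac{1}{2}$. For the limit, $P(3k+c,k,2)=\p\lr\frac{S_k-2k}{\sqrt{2k}}\le\frac{c}{\sqrt{2k}}\rr$. The classical central limit theorem guarantees that the standardized sum converges weakly to $N(0,1)$, and since $c/\sqrt{2k}\to 0$ the probability tends to $\Phi(0)=\tfrac{1}{2}$.

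The principal obstacle is the waddle decomposition step: one must verify carefully that greedy waddle lengths on an i.i.d.~Bernoulli$(1/2)$ string are really i.i.d.~with the $1+\mathrm{Geom}(1/2)$ law, which rests on the observation that each partial sum $W_1+\cdots+W_i$ is a stopping time after which the remaining string is independent and identically distributed. Once this reduction is in place, the two conclusions follow from standard facts---a parity/symmetry argument for the exact identity and the CLT for the asymptotic claim. (One could alternatively verify the first half by checking the closed form $\sum_{j=k}^{2k-1}\binom{j-1}{k-1}2^{-j}=\tfrac{1}{2}$, but the symmetry viewpoint is more transparent and explains why the threshold for the exact $\tfrac{1}{2}$ sits at $n=3k-1$ rather than at the mean $n=3k$.)
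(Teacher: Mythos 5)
Your argument is correct, and it rests on the same key reduction as the paper -- the waddle decomposition of a binary string -- but you finish it differently. The paper counts $k$-omni strings constructively: each tight 1-omni block of length $\ell\ge 2$ has exactly two realizations ($1\cdots10$ or $0\cdots01$), which yields the closed form $P(n,k,2)=2^{-k}\sum_{t=0}^{n-2k}\binom{t+k-1}{k-1}2^{-t}$; the value $\tfrac12$ at $n=3k-1$ then comes from the cited identity $\sum_{t=0}^{k-1}\binom{t+k-1}{k-1}2^{-t}=2^{k-1}$ (Gould), and the limit for $n=3k+c$ is dispatched with a brief remark that the extra terms are negligible. You instead keep everything probabilistic: the waddle lengths are i.i.d.\ copies of $1+\mathrm{Geom}(1/2)$, so $P(n,k,2)=\p(S_k\le n-k)$ with $S_k$ negative binomial, i.e.\ the number of fair-coin flips needed to see $k$ tails; the exact statement becomes the observation that in $2k-1$ flips the events ``at least $k$ tails'' and ``at least $k$ heads'' are disjoint, exhaustive, and equiprobable, and the limit follows from the CLT since $c/\sqrt{2k}\to0$. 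The two routes manipulate the same underlying distribution (your $\p(S_k=k+t)$ is exactly the paper's summand), but your symmetry partition is in effect a probabilistic proof of the Gould identity and explains why the exact value $\tfrac12$ sits at $n=3k-1$, while your CLT step makes the asymptotic claim rigorous (and, as in the paper's parenthetical remark, works just as well for $c=o(\sqrt{k})$); the paper's enumeration, in exchange, produces an explicit formula for $N(n,k,2)$ and $P(n,k,2)$ valid for all $n$, which your reduction only gives implicitly.
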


\begin{proof}
We provide a constructive count of $N(n,k,2)$.  By Lemma 3.1, a string is $k$-omni precisely when it contains at least $k$ disjoint 1-omni substrings.  For any string $S=(s_1,s_2,\ldots, s_n)$, let $S_{i..j}$ denote the substring $(s_i,\ldots,s_j)$.  Given a $k$-omni string $S$, let $\{i_j\}_{j=0}^{m}$ be defined as follows:  $i_0 = 0$, and for $j>1$, $i_j$ is the smallest integer such that $S_{i_{j-1}+1..i_j}$ is 1-omni, and $m$ is as large as possible.  Now define the sequence $\{i'_j\}_{j=1}^{m}$ by $i'_j = i_j - i_{j-1}$; that is, each $i'_j$ gives the length of the relevant 1-omni substring of $S$.

Now suppose $i'_1+i'_2+\ldots+i'_k = t$ for some fixed $t$.  Since $i'_1,i'_2,\ldots,i'_k\geq 2$, elementary combinatorics gives that there are $\binom{t - k - 1}{k-1}$ solutions to this equation.  For each solution $(i_1, i_2, \ldots, i_k)$, there are precisely $2^{k + (n- t)}$ choices for $S$, since each 1-omni substring can be independently chosen to be of the form $11\ldots 10$ or $00\ldots 01$, and the remaining $n-t$ elements of $S$ can then be chosen arbitrarily.  Thus there are a total of
$$N(n,k,2) = \sum_{t = 2k}^{n} \binom{t - k - 1}{k-1} 2^{n+k- t}$$
possible $k$-omnisequences of length $n$, and the probability that a given sequence of length $n$ is $k$-omni is
\begin{align*}
P(n,k,2) & = \frac{N(n,k,2)}{2^n}\\
& = \sum_{t = 2k}^{n} \binom{t - k - 1}{k-1} 2^{k - t}\\
& = \frac{1}{2^k} \sum_{t = 0}^{n-2k} \binom{t + k - 1}{k-1} 2^{- t}.
\end{align*}
Since $\sum_{t = 0}^{k-1} \binom{t + k - 1}{k-1} 2^{-t} = 2^{k-1}$ (see, e.g.~Gould \cite{go}), $P(3k-1,k,2) = \frac{1}{2}$.  On the other hand, it is not hard to see that if $r$ is constant (or indeed if $r=o(\sqrt{k})$), $P(3k + r, k, 2)\rightarrow P(3k-1,k,2)=\frac{1}{2}$ as $k\rightarrow\infty$.  This proves the theorem.
\end{proof}

\section{Old and Recent Results on Coupon Collection}  Our intent in this section is to provide a quick review of some classical and recent work on coupon collection, keeping potential applications to omnisequences in mind at all times.  In Section 6, we will use the groundwork laid down in this section to make progress beyond Theorem 4.1.
\subsection{Approaches} Perhaps the easiest and most natural way to view the coupon collector problem is as an occupancy problem is which we place $n$ balls in $a$ urns so that the $a^n$ possibilities are equiprobable.  This is the {\it classical} approach detailed, e.g., in Feller \cite{fe}, and which yields, e.g., an exact expression for $p_b$, the probability that exactly $b$ of the $a$ coupons have been collected:  We first choose the $b$ coupons that need to be collected and then distribute the $n$ balls into the corresponding $b$ urns so that none is empty.  This yields
\[p_b=\frac{{a\choose b}S(n,b)}{a^n},\]
where 
\[S(n,b)=\sum_{j=0}^b(-1)^j{b\choose j}(b-j)^n\] are Stirling numbers of the second kind. We will return to this key
example in the next section when we consider the {\it number of
missing $k$-words}, i.e. words that are not found as a subsequence of
a given $n$-string. {\it Question:  Is there a natural way to model omnibus behavior as the successful culmination of a dependent urn model that starts as follows:  The first $k$ letters lead to a ball being tossed into one ($={k\choose k}$) of the $a^k$ boxes; the $(k+1)^{\rm st}$ letter enables one to place a ball in each of ${{k+1}\choose{k}}-{k\choose k}=k$ boxes, etc.}

  Consider next the {\it the waiting time} approach mentioned at the beginning of  Section 3:  The waiting time $W_{1,a}$ for the completion of a collection of $a$ coupons is expressed as the sum of $a$ geometric random variables with declining success probabilities 1, $(a-1)/a, (a-2)/a,\ldots 1/a$.  This representation enabled us to quickly discover the fact that  
\[\e(W_{1,a})=aH(1..a)=a(\log a+\gamma+o(1))\enspace(a\to\infty),\]
can be used to compute generating function, moments, etc., and
was the basis of the method employed by Erd\H os and R\'enyi \cite{er} to prove the extreme value limit theorem
\begin{equation}\p\lr\frac{W_{1,a}-a\log a}{a}\le x\rr\to\exp\{-e^{-x}\}\enspace (a\to\infty).\end{equation}
Finally, and of relevance to us, the geometric representation has been used (see \cite{ho} for references) to work out the asymptotics for  the distribution of the number of missing coupons, the waiting time for the $b^{\rm th}$ coupon, etc.  {\it Question:  Can we model the march towards ``omnibusness" as the realization of a sequence of dependent random variables, possibly as follows: The $r$th sequence entry generates $X_r$ new words, where the random variable $X_r$ is supported on $\{0,1,\ldots,{r\choose k}-{{r-1}\choose k}\}$?}  (We use a simpler geometric distribution model in Section 5.5 to generalize (1).)

The {\it Poisson embedding} approach is at the basis of the exposition in Aldous \cite{al}, who uses a heuristic to correctly ``guess" several  answers -- both to the coupon collector problem and to various generalizations such as waiting times until most coupons are collected; until each coupon is collected $A+1$ times (alas, this is not quite the same as having an $A+1$-omnisequence!); until each coupon is collected once when these are not equally likely to occur; until each subset in a class is hit; etc.  Lars Holst's important paper \cite{ho} shows how we may embed the placement of balls in urns (or, equivalently the drawing of balls from urns) into a Poisson process, so that many classical ``quota-related" occupancy problems such as the birthday problem, coupon collector problem, and occupancy count problem can be recast in terms of order statistics from a gamma distribution.  In addition, this method enables one to provide easier solutions to the problem of multiple coupon collection ($A+1$ coupons of each kind)  {\it Question:  Can Poisson embedding be of value in understanding Omnibus behavior?}

{\it Poisson Approximation} is another possibility that allows one to go beyond waiting time analyses.  When events are rare, the probability distribution of their counts is often well approximated by a Poisson distribution, the dependencies between the events notwithstanding.  Now, if $n$ is large compared to $a$, then a coupon being missing would be a rare occurrence.  The number of missing coupons, or the number of empty boxes, ought to have a Poisson distribution.  The Stein-Chen method of Poisson approximation, as painstakingly described in the monograph by Barbour, Holst, and Janson \cite{bhj}, is one way to quantify closeness to a Poisson distribution in an appropriate metric.  Accordingly, we find in Chapter 6 of \cite{bhj}, or in the paper \cite{baho} that features many examples related to occupancy, that the total variation distance -- between the distribution of the number of boxes with $m\ge2$ or more balls (birthday coincidences), or the number of empty boxes (missing coupons) and appropriately defined Poisson distributions -- is small under a set of conditions that permit large expected values.  Now we shall see in Section 6 that it is a rare occurrence for words to not be embedded in alphabet strings.  {\it Question:  Can the count of such words have a Poisson distribution?  Or are the dependencies such that a more complicated distribution is forced upon $X:=\sum_{j=1}^{a^k}I_j$, where the indicator variable $I_j$ equals one iff the $j^{\rm th}$ word cannot be found embedded in the $n$-string?}

Significant progress has been made in recent years towards a fine-resolution understanding of coupon collection, using methods from {\it Analytic Combinatorics}.  The paper by Zeilberger \cite{ze} has the provocative title ``How many singles, doubles, triples, etc. should the coupon collector expect?" that exemplifies the kinds of problems under attack from mathematicians from Paris to Philadelphia (\cite{fl}, \cite{fhl}, \cite{fo}, \cite{wi}, \cite{ze}).  It is undeniable, as we shall see in Sections 5.4, 5.5, and 6, that it is precisely results of this nature that will help one understand and establish the link between that which we know (waddle counts; coupons) and that which we seek to know (missing word counts).
\subsection{An Aside on Expected Values}  During Spring 2009, the senior author (AG), on sabbatical at Johns Hopkins University, taught a class entitled {\it Chance and Risk} to a small group of liberal arts students.  Students found embedded messages of their own creation in text that they randomly generated at {\tt www.random.org}.  The word ``omnibus" became part of their lexicon.  They felt some pride at the realization that another Hopkins student (SS) was a principal player in the creation of ``this omni research."  Coupon collection was thus an important class theme, and we derived the fact that $\e(W_{1,a})=aH(1..a)$ using nothing more than the fact (well-motivated if not proven) that if $X\sim{\rm Geo}(p)$, then $\e(X)=1/p$.  A student asked the following question, soon after the solution for $a=3$ ``gumballs" was presented: ``What if there aren't the same percentage of red ($R$), blue ($B$), and green ($G$) gumballs?"  The famous ``Yasin's gumball machine" problem was thus born, in which the aforementioned probabilities were 1/2, 1/3, and 1/6 respectively.  While attempting to prepare an answer key, AG realized that the only solution available in the literature for coupons that are present in unequal proportions {\it appeared}  to be the one originally given by Von Schelling \cite{sc}, which used inclusion-exclusion and was thus ``not suitable" for this class.  Below we offer an alternative.

Conditioning on the order in which the three colors appear (these are $RBG, RGB, BRG, BGR, GRB, GBR$ with respective probabilities $\frac{1}{2}\cdot\frac{1/3}{1/3+1/6}\cdot1$, $\frac{1}{6},\frac{1}{4},\frac{1}{12},\frac{1}{10},\frac{1}{15}$), we need to find the conditional expectation of the waiting time given the order of the first appearance of the colors.  Assume that the order is $RBG$.  The waiting time is then clearly $1+x+6$, where $x$ is the additional waiting time until the $B$ appears. It felt initially that this waiting time ought to be shorter than if one were waiting for a $G$ after an $R$.  {\it But it isn't}.  The conditional distribution computation reveals that in fact $x=2$, the same as the waiting time for either $B$ or $G$, given that $R$ appeared first.  Thus, in this example,
\begin{eqnarray*}\e(W_{1,3})&=&\frac{1}{3}\cdot[1+2+6]+\frac{1}{6}\cdot[1+2+3]+\frac{1}{4}\cdot[1+1.5+6]+\\&&{}\frac{1}{12}\cdot[1+1.5+2]+\frac{1}{10}\cdot[1+1.2+3]+\frac{1}{15}\cdot[1+1.2+2]=7.3,\end{eqnarray*}and, in general we have the following

\medskip

\noindent {\bf Alternative Expression for Expected Waddletime}  {\it Let balls be independently thrown into boxes labeled $1,2,\ldots,a$ so that any box hits box $j$ with probability $p_j$.  Then the expected value of the time $W=W_{1,p_1\ldots,p_a}$ until all boxes are nonempty satisfies
\[\e(W)=\prod_{j=1}^ap_j\sum_{\pi\in{\cal S}_a}q_{\pi(1)}\ldots q_{\pi(a-1)}\lr1+q_{\pi(1)}+\ldots+ q_{\pi(a-1)}\rr\]
where \[q_{\pi(j)}=\frac{1}{1-\sum_{i=1}^jp_{\pi(i)}}\]}
The above expression yields an expected 1-omni time of around 2250 until each of the letters $A$ through $Z$ are randomly obtained, if we generate the letters according to the frequency with which they actually appear in ``normal" English text. Also, the same basic technique can be used to derive a direct expression for the expected collection time for (say) two copies of each coupon in the non uniform case.  

In the next three subsections, we collect some key results that each flesh out some of the ideas from Sections 5.1 and 5.2.  Once again, we keep our ear close to the ground in the hope that we will hear something of potential application to the omnisequence problem.

\subsection{Variations}
There are many variations on the basic coupon collection theme.  We have mentioned unequal coupon probabilities and waiting times until each subset of coupons in a certain class is hit \cite{al}.  Adler and Ross \cite{ro2} study a problem that continues to examine the subset theme; they allow coupons to be collected in certain forms of subsets -- a cereal box might, for example, contain a packet that has pictures of six baseball catchers -- and the object of interest is the waiting time until each coupon is in at least one collected subset.   Myers and Wilf \cite {wi} study two simultaneous collectors.  What is the chance that they end their collections at the same time?  What is the chance that, after being tied for a while, one collector forges ahead, never to look back?  A series of such intriguing connections to ballot-like problems are given in \cite{wi}.  May \cite{may} considers coupon collection with quotas and unequal probabilities, e.g., the collection of the letters in the name ``Dr.~Pepper," where one must collect three $P$s, two $E$s, etc., and where the letters are not found with equal probabilities.

A further generalization mentioned earlier, and of particular interest to us, is coupon collection until $A+1$ copies of each coupon are scored, $A\ge1$.  Note that a (minimal) collection with $A+1$ copies of each coupon might decompose into anywhere between one and $A+1$ waddles. In the case that $A=1$, the problem goes by the name of the ``double dixie cup" problem and was first studied in the Monthly by Newman and Shepp \cite{sh}. As mentioned earlier, simpler proofs of several of Newman and Shepp's results were given by Holst \cite{ho}\footnote{See also, Myers and Wilf, where analytic combinatorial methods are used to rederive many of the results in \cite{sh}.}, who exhibited, with $V=V_{a,A+1}$ denoting  the waiting time until $A+1$ copies of each coupon are obtained and 
$$V^*=V^*_{a,A+1}=\frac{V}{a}-\log a-A\log\log a+\log A!,$$ 
that as $a\to\infty$,

(i) $V^*_{a,1}\ldots,V^*_{a,m}$ are asymptotically independent;

(ii) $\p(V^*_{a,A+1}\le u)\to\exp\{-e^{-u}\}$; and 

(iii) $\e(V_{a,A+1})=a\lr\log a+A\log\log a+\gamma-\log A!+o(1)\rr$.

\noindent It is (iii) that we draw special attention to, since it illustrates a fact that holds in many cover time problems:  If the first cover leads to duplication that is logarithmic in the size of the problem, then subsequent cover times are faster -- generating, and in a linear fashion, only $\log \log a$ ``duplicates."  Here are two further examples of this phenomenon.

\noindent {\it (i) Covering Designs}.  Let $1<t<k<n$ be integers.   A collection ${\cal A}_\l$ of $k$-element subsets (``blocks") of the $n$ element set $[n]:=\{1,\ldots,n\}$ is said to be a {\it $t-(n,k,\l)$ covering design} if each $t$-element set is contained in at least $\l$ blocks.  A natural question is:  What is the smallest size of ${\cal A}_\l$?  Erd\H os and Spencer \cite {ersp} showed that
\[\vert {\cal A}_1\vert\le{\frac{{{n}\choose {t}}}{{k\choose t}}}\lr1+{\log{k\choose t}}\rr,\]
while Godbole et al. \cite{vi} derived the bound
\[\vert {\cal A}_\l\vert\le{\frac{{n\choose t}}{{k\choose t}}}\lr1+{\log{k\choose t}}+(\l-1)\log\log{k\choose t}+O(1)\rr\] 
under some mild assumptions.

\noindent{\it (ii) $t$-Covering Arrays}. A $k\times n$ array with entries from the $q$-ary alphabet $\{0,1,\ldots, q-1\}$ is said to form a $t-(k,n,q, \l)$ {\it covering array} (\cite {sl}) if for each choice of $t$ columns, each of the $q^t$ ``words" of length $t$ may be found at least $\l$ times among the rows of the selected columns.  For fixed $n, t, q$ let $k=k(n,t,q,\l)$ be the smallest number of rows for which a $t-(k,n,q,\l)$ { covering array} exists.  Better bounds are known for $t=3, q=2, \l=1$, for example, but general upper bounds, proved in  \cite{sk} are the following ($A$ is a well specified constant)
\[k(n,t,q,1)\le A(t-1)\log n, \]and
\[k(n,t,q,\l)\le A\lr (t-1)\log n+(\l-1)\log\log n\rr.\]

\subsection{Coupon Counts}  The discussion at the end of Sections 5.1 and  5.3 illustrates that an important auxiliary variable would be counts of coupons of  different types.  Several questions may be asked.  Perhaps the first is how many coupons have been collected precisely once at the end of a successful coupon collecting quest.  Myers and Wilf \cite{wi} solve this problem.  Among their key results is the fact that on average $H(1..a)\approx \log a$ coupons have been collected precisely once at the end of a minimal completed coupon collection.  This implies that roughly $\log a$ coupons need to be collected a second time by the double dixie cup collector; the rest have already been collected twice as part of the first collection.  Thus, heuristically, the additional waiting time until these singletons turn into doubles is $a\cdot\log\log a$, as seen above \footnote{Notice also that the problem of how many $t$ sets or $t$-letter words have been covered precisely once in a covering design or $t$-covering array respectively would provide an extension of the work in \cite{wi}.}.  This fact reveals a key difference between the waiting time until each coupon is collected $k$ times and the waiting time until the sequence is $k$-omni, for which the expected value is $kaH(1..a)$.

A problem inverse to that in \cite{wi} was tackled by Badus et al. \cite{ba}.  The problem they considered was the following: How many copies are there of the $r^{\rm th}$ new coupon to be collected?  Extending the work in \cite{wi}, Zeilberger \cite{ze} gave a simpler proof of a closed form formula, first derived by Foata, Han, and Lass \cite{fhl}, for the generating function $\sum_{i=1}^\infty E(Y_i)t^i$, where $Y_i$ is the number of coupons that have been collected precisely $i$ times.   Another key contribution in \cite{fhl} is the computation of the multivariate generating function of $\p(Y_1=y_1,\ldots,Y_r=y_r, W_{1,a}=w)$.  By rephrasing the problem in terms of a coupon collector and his ordered infinite sequence of younger brothers to whom duplicates are passed on sequentially, Foata and Zeilbeger \cite{fo} derive results about the expected numbers of missing coupons in the collections of younger brothers, when $p$ brothers have complete collections.  A simpler proof of these results, for $p=1$, is provided in \cite{ro}.

\subsection{Limit Theorems} 
In this subsection, we veer the discussion back to omnisequences.  We have seen that the normalized waiting time for the coupon collector follows asymptotically a Gumbel distribution as the number of coupons gets large.  A generalization to unequal coupon probabilities is given by Neal \cite{ne}, using the Stein-Chen method \cite{bhj}.  A further generalization is provided by Martinez   \cite{ma}, who proves a ratio limit theorem for the waiting time until $A+1$ copies of $h$ coupons are obtained.  For equally likely coupons, there are a host of approximations for small values of $a$; these are of the normal, saddlepoint, and lognormal types, and a good summary may be found in \cite{ku}.  The point to emphasize is that the situation is complicated even for a single waddle if the coupon size is small. On the other hand, if $k$ is allowed to get large, then the following result on the waiting time for a sequence to become $k$-omni follows easily from the central limit theorem.
\begin{theorem} Let $W_{k,a}$ be the waiting time until a sequence $\{X_n\}_{n=1}^\infty$ of i.i.d. letters uniformly generated from $\{1,2,\ldots,a\}$ becomes $k$-omni.  Then
\[\p\lr\frac{W_{k,a}-kaH(1..a)}{\sqrt{k}S}\le x\rr\to\frac{1}{\sqrt{2\pi}}\int_{-\infty}^x\exp\{-u^2/2\}du\enspace(k\to\infty),\]
where $S$ denotes the standard deviation for the waiting time $W_{1,a}$ until a single coupon collection is obtained.  
\end{theorem}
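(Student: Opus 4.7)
The plan is to reduce this to the classical Central Limit Theorem for an i.i.d.\ sum, leveraging the disjoint-renewal structure that the Waddle Lemma provides.

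First, I would use Lemma 3.1 (the Waddle Lemma) to express $W_{k,a}$ as a sum of $k$ independent copies of $W_{1,a}$. Concretely, reading the sequence left to right, let $T_1$ be the first index at which a complete 1-omni substring has been observed, let $T_2$ be the first index after $T_1$ at which a second disjoint 1-omni substring has been completed, and so on. By the Waddle Lemma, $W_{k,a}=T_k$. Since the letters $\{X_n\}$ are i.i.d.\ uniform on $[a]$, the strong Markov property (applied at each stopping time $T_j$) shows that the increments $T_j - T_{j-1}$ are i.i.d., each distributed as $W_{1,a}$. Hence
\[
W_{k,a} \;=\; \sum_{j=1}^{k} Z_j, \qquad Z_j \stackrel{\text{i.i.d.}}{\sim} W_{1,a}.
\]

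Next, I would invoke the moment computations already given in Section 3: $\e(W_{1,a}) = aH(1..a)$ and $\v(W_{1,a}) = S^2 = a^2\sum_{i=1}^{a-1} 1/i^2 - aH(1..a-1) < \frac{\pi^2}{6}a^2$, so in particular $S^2$ is finite (for $a\ge 2$ fixed). This gives $\e(W_{k,a}) = kaH(1..a)$ and $\v(W_{k,a}) = kS^2$, matching the centering and scaling in the statement.

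Finally, applying the classical Lindeberg--L\'evy Central Limit Theorem to the i.i.d.\ sum $\sum_{j=1}^k Z_j$ yields
\[
\p\!\left(\frac{W_{k,a} - kaH(1..a)}{\sqrt{k}\,S} \le x\right) \longrightarrow \frac{1}{\sqrt{2\pi}} \int_{-\infty}^{x} e^{-u^2/2}\,du
\]
as $k\to\infty$, which is exactly the claim.

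The only step requiring any real care is the first one --- establishing that the inter-waddle increments are genuinely i.i.d.\ and distributed as $W_{1,a}$. This follows because after completing the $(j-1)$st disjoint waddle at step $T_{j-1}$, the future letters $X_{T_{j-1}+1}, X_{T_{j-1}+2},\ldots$ are, by independence of the underlying sequence, again i.i.d.\ uniform on $[a]$ and independent of the past; so the time to complete the next disjoint waddle from that point is an independent copy of $W_{1,a}$. Beyond this observation, the result is immediate from the standard CLT, and no further machinery is needed.
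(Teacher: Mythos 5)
Your proposal is correct and follows essentially the same route as the paper: the disjoint-renewal (waddle) decomposition of $W_{k,a}$ into $k$ i.i.d.\ copies of $W_{1,a}$ with the mean and variance computed in Section 3, followed by the classical CLT. The paper treats this as immediate, so your extra care in justifying the i.i.d.\ increments via the strong Markov property is a welcome but equivalent elaboration.
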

The above result can be used to deduce, for example, that as $k\to\infty$ ($a$ fixed) the probability $P(kaH(1..a)+\sqrt{k}, k, a)$ that a string of length $kaH(1..a)+\sqrt{k}$ is $k$-omni satisfies
\[P(kaH(1..a)+\sqrt{k}, k, a)=\p\lr\frac{W_{k,a}-kaH(1..a)}{\sqrt{k}S}\le \frac{1}{S}\rr\to\Phi\lr\frac{1}{S}\rr,\]
where $\Phi$ is the standard normal distribution function; for $a=4$, we get $S=3.8$ and a limiting value of approximately 0.603.  Also, Theorem 5.1 reveals that the probability $P(kaH(1..a)+O(1), k, a)$ is asymptotically 0.5 for all $a$, thus extending Theorem 4.2.

The situation is different if $k$ is held fixed and we allow $a$ to tend to infinity.  By conditioning on the $a!^k$ orders in which letters could be generated so as to yield an omnibus sequence, we see that $W_{k,a}$ can be written as the sum of $ak$ independent geometric waiting times, with precisely $k$ having success probability $j/a$, $1\le j\le a$.  Thus, we recognize that 
\[\p\lr\frac{W_{k,a}-ka\log a}{a}\le x\rr\] represents the distribution function of the sum of $k$ identical copies of the normalized single waddle times $({W_{1,a}-a\log a})/{a}$.  The next result follows easily from the Erd\H os-R\'enyi result (1):
\begin{theorem}
\[\p\lr\frac{W_{k,a}-ka\log a}{a}\le x\rr\to\Psi(x)\enspace(a\to\infty),\]
where $\Psi$ is the distribution function of the sum of $k$ independent Gumbel variables. \end{theorem}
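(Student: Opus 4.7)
The plan is to build $W_{k,a}$ out of $k$ independent copies of the single-waddle waiting time $W_{1,a}$ and then transfer the univariate Erd\H os--R\'enyi limit (1) across the sum.

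First I would invoke the waddle lemma (Lemma 3.1) together with the strong Markov property of the i.i.d.\ letter sequence to write
\[W_{k,a}\stackrel{d}{=}W_{1,a}^{(1)}+W_{1,a}^{(2)}+\cdots+W_{1,a}^{(k)},\]
where $W_{1,a}^{(1)},\ldots,W_{1,a}^{(k)}$ are independent copies of $W_{1,a}$. This is the same decomposition used implicitly in Section 4 to compute $\e(W_{k,a})$ and $\v(W_{k,a})$ as $k$ times the single-waddle mean and variance, and it expresses the fact that once a waddle closes, the coupon collector restarts from scratch on fresh i.i.d.\ letters. Centering and scaling each summand,
\[\frac{W_{k,a}-ka\log a}{a}=\sum_{i=1}^k\frac{W_{1,a}^{(i)}-a\log a}{a}=:\sum_{i=1}^kZ_a^{(i)}.\]

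Next, the Erd\H os--R\'enyi limit (1) gives $Z_a^{(i)}\Rightarrow G_i$ as $a\to\infty$, where each $G_i$ is a standard Gumbel variable. Because the $Z_a^{(i)}$ are independent for every fixed $a$, the joint characteristic function of $(Z_a^{(1)},\ldots,Z_a^{(k)})$ factors as a product of $k$ marginal characteristic functions, each of which converges pointwise to the Gumbel characteristic function. L\'evy's continuity theorem then yields the joint weak convergence $(Z_a^{(1)},\ldots,Z_a^{(k)})\Rightarrow(G_1,\ldots,G_k)$, the limit having independent coordinates. Applying the continuous mapping theorem to the summation map $(z_1,\ldots,z_k)\mapsto z_1+\cdots+z_k$ gives $\sum_iZ_a^{(i)}\Rightarrow G_1+\cdots+G_k$, whose distribution function is $\Psi$ by definition.

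The only conceptual step beyond quoting (1) is the passage from marginal to joint weak convergence, and this is essentially automatic because independence is preserved along the sequence and each coordinate is already tight. I do not anticipate a genuine obstacle here; the main point worth spelling out carefully is the legitimacy of the decomposition $W_{k,a}=\sum_iW_{1,a}^{(i)}$, i.e.\ that the waiting time between the close of one waddle and the close of the next is distributed exactly like $W_{1,a}$. This renewal property follows from the i.i.d.\ structure of the letter stream, and it is precisely what reduces the theorem to a classical extreme-value CLT for a fixed number of summands.
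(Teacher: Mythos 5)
Your proposal is correct and follows essentially the same route as the paper: the paper likewise regards $W_{k,a}$ as a sum of $k$ independent copies of the single-waddle time $W_{1,a}$ (phrased there via the renewal structure and a sum of $ak$ independent geometrics) and then transfers the Erd\H os--R\'enyi limit (1) to the $k$-fold sum. Your explicit Lévy-continuity/continuous-mapping justification merely spells out the step the paper dismisses with ``follows easily.''
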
 Unfortunately, the representation of $\Psi$, as given by Nadarajah \cite{na} is not amenable to easy analysis.

\section{Missing Word Counts}
We now change our approach to the omnibus problem.  Instead of considering only sequences that contain all possible length $k$ strings, consider strings that do not necessarily attain them all.
Given a sequence $S$ of length $n$ on $[a]$, define the \emph{number of missing $k$-sequences} of $S$ to be the number of distinct $k$-sequences on $[a]$ that cannot be obtained as a subsequence of $S$.  Denote this quantity by $M=M_{k,a}=M_{k,a}(S)$, so that
 $M_{k,a}(S)=\displaystyle\sum_{T\in [a]^k} I_{k,a}(S,T)$ where the indicator variable $I_{k,a}(S,T)=I(T)$ equals 1 iff
the word $T$ is not a subsequence of $S$.  The following result is critical, and is in marked contrast to the situation when words have to occur as strings.

\begin{lemma}\label{missingequal}
For a sequence $S$ on $[a]$, the probability that a $k$-sequence is missing in $S$ is equal to the probability that any other $k$-sequence is missing in $S$.
\end{lemma}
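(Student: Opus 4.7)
My plan is to prove a stronger statement: for each $m \in \{0,1,\ldots,k-1\}$, the number of length-$n$ sequences $S$ for which the \emph{greedy} attempt to embed $T$ fails after exactly $m$ matched letters depends only on $n,m,a$, not on the particular word $T$. The lemma will follow by summing over $m$ and dividing by $a^n$.

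The setup is the canonical greedy embedding: scan $S$ from left to right and, at each step, grab the next letter of $T$ at its first available occurrence. Let $p_1 < p_2 < \cdots < p_m$ be the positions used (with the convention $p_0 = 0$), where $m = m(S,T)$ is chosen as large as possible; then $T$ is a subsequence of $S$ iff $m = k$, so $T$ is missing from $S$ precisely when $m < k$.

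The core step is to fix $m < k$ and enumerate the sequences $S$ with $m(S,T) = m$. Because each $p_i$ is the \emph{first} occurrence of $t_i$ after $p_{i-1}$, every position strictly between $p_{i-1}$ and $p_i$ must carry a letter other than $t_i$; likewise, every position after $p_m$ must avoid $t_{m+1}$, else the greedy match would extend. Thus, once the $m$ matched positions are chosen (in $\binom{n}{m}$ ways), each of the $n-m$ remaining positions has exactly $a-1$ allowed letters, and the $m$ matched positions themselves are forced. This gives a count of $\binom{n}{m}(a-1)^{n-m}$ sequences — an expression in which no trace of the specific letters of $T$ remains. Summing over $m=0,\ldots,k-1$ and dividing by $a^n$ yields an expression for $\p(T \text{ missing})$ that is manifestly independent of $T$.

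The step that deserves the most care is the forbidden-letter analysis in the final (tail) gap after $p_m$: I need to check that the condition "greedy cannot match $t_{m+1}$" translates exactly to avoiding a single letter at each tail position, and that this argument is legitimate precisely because $m<k$ guarantees $t_{m+1}$ is defined. Once this bookkeeping is verified, the fact that every gap contributes the same factor of $a-1$ per non-matched position — regardless of whether the gap sits between two matches or trails the last one — is immediate, and the $T$-invariance of the count (hence of the probability) follows at once.
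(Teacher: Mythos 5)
Your proposal is correct and is essentially the paper's own argument: the paper also decomposes by ``$j$-fold progress'' (greedy matching of exactly $j<k$ letters of $T$ at their first available occurrences), chooses the $j$ matched positions in $\binom{n}{j}$ ways, and forbids one letter in each gap and in the tail, giving $\p(T \text{ missing})=\sum_{j=0}^{k-1}\binom{n}{j}\left(\tfrac{1}{a}\right)^{j}\left(\tfrac{a-1}{a}\right)^{n-j}$, which is your count $\binom{n}{m}(a-1)^{n-m}$ summed over $m$ and divided by $a^n$. The only difference is cosmetic: you phrase it as an enumeration of sequences, the paper as a direct probability computation.
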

\begin{proof}
Say $S$ is length $n$, and let $T=(t_1,t_2,\ldots,t_k)$ be any word.  Then $T$ is missing if and only if for some $0\le j\le k-1$ we make ``$j$-fold progress" towards the attainment of $T$, i.e. the first $j$ letters of $T$ can be found in $S$ as a subsequence, but not the first $j+1$.  Let us choose the spots where these $j$ letters are to appear for the first time in ${n\choose j}$ ways.  Label the spots as $i_1,\ldots,i_j$.  Now the letters prior to $i_1$ cannot contain the letter $t_1$, the letters in between $i_1$ and $i_2$ must be devoid of a $t_2$, etc.  It follows that 
\[\p(I_T=1)=\sum_{j=0}^{k-1}{n\choose j}\lr\frac{1}{a}\rr^j\lr\frac{a-1}{a}\rr^{n-j},\]
which is merely the cumulative binomial probability $B(n,k-1,1/a)$.
The above expression is dependent on only $n$ and $k$, but not on what sequence $T$ is. Notice that, for example, when $a=2$ and
$T=11\ldots1$, we should interpret the above equation as saying that
$T$ is missing if and only if the sequence $S$ contains at most
$(k-1)$ $1$s.
\end{proof}

\subsection{The Gap} We now calculate the asymptotics of the expected value $\e(M_{k,a}(S))$ over all length $n$ strings $S$, as $k\rightarrow\infty$ and $n/k=r$ is held constant.  By linearity of expectation,
 
\begin{equation}
\label{expected number of missing}
E(M_{k,a}(S)) = \displaystyle a^k\sum_{j=0}^{k-1}{n\choose j}\lr\frac{1}{a}\rr^j\lr\frac{a-1}{a}\rr^{n-j}.
\end{equation}
Now for $n \geq ak$, the maximum term in the sum (2) is the one corresponding to $j=k-1$.  This is easy to see by taking ratios of consecutive terms, and can be made precise by the following inequality from Barbour et al.~\cite{bhj}:
$${\rm Bi}(n,p)\{0,\ldots,m-1\}\le\frac{(n-m)p}{(n-1)p-(m-1)}{\rm Bi}(n,p)\{m-1\},\enspace m<np+(1-p),$$  where 
$${\rm Bi}(n,p)(A)=\sum_{j\in A}{n\choose j}p^j(1-p)^{n-j}.$$ This leads to 
\[{a^k}{\rm Bi}(n,\frac{1}{a})\{k-1\}\le E(M_{k,a}(S))\le \frac{a^k}{\lr1-\frac{ak}{n}\rr}{\rm Bi}(n,\frac{1}{a})\{k-1\}\le4a^k{\rm Bi}(n,\frac{1}{a})\{k-1\}\]if, e.g., we take $n\ge \frac{8}{9}kaH(1..a)$. Thus, $$E(M_{k,a}(S))\sim A \cdot\frac{1}{a^{n-k}}\cdot\binom{n}{k-1}(a-1)^{n-k+1}$$ for some constant $A$.  Applying Stirling's approximation with $n=rk$, we see that 
\begin{equation}E(M_{k,a}(S))\sim \frac{A(a-1)\sqrt{r}}{(r-1+o(1)){\sqrt{2\pi(r-1)k}}}\lr\frac{(a-1)^{r-1}r^r}{a^{r-1}(r-1)^{r-1}}\rr^k.\end{equation}
Now we have seen that previous asymptotic results are all couched in terms of alphabet sizes that grow to infinity.  On the other hand, omnibus behavior is best appreciated for long words from a fixed size alphabet.  Accordingly, we ask what happens to $\e(M)$ as $k\to\infty$, and find from (3) that with $$D(a,r) = \frac{ (a-1)^{r-1}r^r}{a^{r-1}(r-1)^{r-1}},$$ and $a$ fixed, $\e(M)\to0$ as $k\to\infty$ if $D(a,r)\le1$, and $\e(M)\to\infty$ ($k\to\infty$) if $D(a,r)>1$.

Notice the similarity to Theorem 4.1.  Holding the ratio $n/k=r$ constant, we again find that there is a threshold value of $r$ at which there is a sudden change in the asymptotics.  However, these threshold values are not equal to one another.  Recall, e.g., that for $k$-omni strings, the threshold ratio (prior to which the probability of a string being $k$-omni was 0, beyond which it was 1) is $2H(1..2)=3$ for $a=2$.  However, again for $a=2$, we can show that $D(2,r)=1$ when $r\approx 4.403$.  What is going on?  It appears that for values of $n$ between $3k$ and $4.403k$, sequences are omni with high probability, and yet the expected number of missing sequences is huge -- much like the evil two-valued random variable $X$ that takes on values 0 and $n^2$ with probabilities $1-1/n$ and $1/n$ respectively:  $\e(X)$ is large even though $X$ equals zero most of the time.  It appears that $M$ is {\it similarly not concentrated around its mean}.  Specifically, rare non-omni sequences tend to have unaccomplished waddles that lead to very large numbers of missing words.  We return to this question in the next section, but for now demonstrate the fact that there is a negligible ``gap" when the alphabet size is large.  In other words, as $a\to\infty$, the difference between these threshold values grows without bound, but their ratio converges to one:

\begin{theorem}
Given $a$, let $r(a)$ be the real solution to $D(a,r(a)) = 1$.  Then as $a\rightarrow\infty$, $\frac{r(a)}{a H(1..a)}\rightarrow 1$.
\end{theorem}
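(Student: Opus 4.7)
The plan is to take logarithms of $D(a, r(a)) = 1$ and extract the asymptotics of $r(a)$ by a bootstrap argument. Setting
\[g(r) := \log D(a, r) = -(r-1) \log\frac{a}{a-1} + \log r + (r-1) \log\frac{r}{r-1},\]
a direct computation gives $g'(r) = \log\bigl(\frac{r(a-1)}{a(r-1)}\bigr)$, so $g$ increases on $(1, a)$ and decreases on $(a, \infty)$, with $g(a) = \log a > 0$ and $g(r) \to -\infty$ as $r \to \infty$. Since $g(1^+) = 0$ is a trivial root, the equation $D(a, r) = 1$ has exactly one further real solution, and it lies in $(a, \infty)$; this is the $r(a)$ under study.

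Next, apply the standard expansions $\log\frac{a}{a-1} = \frac{1}{a} + O(a^{-2})$ and $(r-1)\log\frac{r}{r-1} = 1 + O(r^{-1})$ to rewrite $g(r(a)) = 0$ as
\[\frac{r-1}{a} + O\!\left(\frac{r}{a^2}\right) = \log r + 1 + O\!\left(\frac{1}{r}\right).\]
Setting $r(a) = a L(a)$, this reduces to $L(a) = \log a + \log L(a) + 1 + o(1)$. Since $\log L = o(L)$ whenever $L \to \infty$, the equation forces $L(a) \sim \log a$, and hence $r(a) \sim a \log a$. Combined with $aH(1..a) = a\log a + \gamma a + o(a) \sim a \log a$, the ratio $r(a)/(aH(1..a))$ tends to $1$, which is the claim.

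The main obstacle is the a priori step: the error terms $O(r/a^2)$ and $O(1/r)$ above are harmless only once we know that $r(a) \to \infty$ and $r(a) = o(a^2)$. I plan to establish this by evaluating $D(a,\cdot)$ at two coarse test points, for instance $r_- = \tfrac{1}{2}a\log a$ and $r_+ = 2a\log a$, checking directly via Stirling-free log manipulations that $D(a, r_-) > 1$ and $D(a, r_+) < 1$ for all sufficiently large $a$; the monotonicity of $g$ on $(a, \infty)$ then sandwiches $r(a) \in (r_-, r_+)$. After this containment is secured, the bootstrap iteration is purely mechanical arithmetic on logarithms.
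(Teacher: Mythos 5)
Your proposal is correct, and its skeleton is the same as the paper's -- sandwich the root of $D(a,\cdot)=1$ between two test values using monotonicity in $r$, then compare with $aH(1..a)\sim a\log a$ -- but the execution differs in two useful ways. The paper plugs the finer test points $r'(a)=a(\log a+\log\log a+c)$ directly into $D$, shows $D(a,r'(a))\to e^{1-c}$, and takes $c=0$ and $c=2$; this immediately localizes $r(a)$ to $\bigl(a(\log a+\log\log a),\,a(\log a+\log\log a+2)\bigr)$, i.e.\ it even identifies the second-order term $a\log\log a$, and the theorem follows by the squeeze. You instead work with $g(r)=\log D(a,r)$, and your computation $g'(r)=\log\bigl(\tfrac{r(a-1)}{a(r-1)}\bigr)$, with $g$ increasing on $(1,a)$, $g(a)=\log a$, and $g\to-\infty$, is a genuine addition: it proves the monotonicity and the existence/uniqueness of the nontrivial root in $(a,\infty)$, facts the paper invokes only implicitly (``the real solution'', ``the result follows by monotonicity''). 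Your coarse bracketing at $r_-=\tfrac12 a\log a$ and $r_+=2a\log a$ does check out ($g(r_-)\approx\tfrac12\log a+\log\log a>0$, $g(r_+)\approx-\log a+\log\log a<0$ for large $a$), it legitimizes the error terms $O(r/a^2)$ and $O(1/r)$, and the bootstrap $L=\log a+\log L+1+o(1)\Rightarrow L\sim\log a$ is sound since $L\geq\tfrac12\log a\to\infty$. The trade-off is that your route delivers only $r(a)\sim a\log a$, which suffices for the stated limit but is weaker than the paper's two-sided bound; in exchange you get a rigorous monotonicity argument and a self-contained treatment of where the root lives.
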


\begin{proof}
We show that for large $a$, $a(\log a + \log \log a) < r(a) < a(\log a + \log \log a + 2)$.  Since also $a H(1..a)\sim a \log a$, $a\to\infty$, the result will follow immediately via the squeeze theorem.
Set $r'(a) = a(\log a + \log \log a + c)$ for $a$ large and $c$ constant.  Then \begin{eqnarray*}D(a,r'(a))&=&\left(\frac{a-1}{a}\right)^{r'(a)-1}\left(\frac{r'(a)}{r'(a)-1}\right)^{r'(a)}\left(r'(a)-1\right)\\&\sim& e^{-r'(a)/a}\cdot e\cdot r'(a)(1+o(1)).\end{eqnarray*}  Thus
$$\begin{array}{lll}
D(a, r'(a)) & \sim & e^{-(\log a + \log \log a + c)}\cdot e\cdot a(\log a + \log \log a + c)\\
& = & \frac{1}{a \log a}\cdot e^{-c+1}\cdot a(\log a + \log \log a + c)\\
& \sim & e^{1-c}
\end{array}
$$
Thus if $c = 0$, $D(a,r'(a)) > 1$, and if $c = 2$, $D(a,r'(a)) < 1$.  But $D(a,r(a)) = 1$, and the result follows by monotonicity.
\end{proof}
\subsection{Understanding the Gap}  A central question is the following:  How many waddles does a random sequence of length $n$ contain?  We seek, in other words, to understand the level crossing time
\[\tau=\inf\{t: W_{1,a,1}+W_{1,a,2}+\ldots+W_{1,a,t}>n\},\]
where the $W_{1,a,j}$s are i.i.d.~random variables with distribution equal to that of a single waddle-time; if $\tau=t$ then the sequence is $t-1$-omni (there are $t-1$ waddles).  

Now if there are $r<k$ waddles, then a rather na\"ive lower bound for the number 
$M_{k,a}$ of missing words of length $k$ is $a^{k-r-1}$, as follows.  Since there are $r$ waddles, let $a_0$ be a letter not contained among the letters after the $r$th waddle is accomplished.  Furthermore, let $a_1,\ldots,a_r$ be the last letters in the $r$ successfully completed coupon collections.  Then we see that none of the words $a_1a_2\ldots a_ra_0x_1x_2\ldots x_{k-r-1}$ are contained in the string, where the $x_j$s are arbitrary.  Thus even $\sqrt{k}$ fewer waddles than expected would lead to at least $a^{\sqrt{k}-1}$ missing words.

Let $n$ be fixed.  We invoke the basic renewal equations from Section XIII.6 in Feller\cite{fe}, that state that the number $N_n$ of disjoint occurrences, among the first $n$ trials, of a recurrent event ${\cal E}$ with mean $\mu$ and variance $\sigma^2$, satisfies
\[\e(N_n)\sim\frac{n}{\mu}; \enspace \v(N_n)\sim\frac{n\sigma^2}{\mu^3}.\]  We thus see that $n$ random keystrokes on an $a$ letter keyboard are expected to contain $n/aH(1..a)$ disjoint sets of strings that do not miss any letter, and that the variance of this quantity is of order ${n\pi^2}/{6aH^3(1..a)}$.  Moreover, $N_n$ is tightly concentrated around its mean, as evidenced, e.g. by Chebychev's inequality or the Azuma-Hoeffding martingale inequality (\cite{st}) that yields, since altering one of the keystrokes $X_1,\ldots,X_n$ can change $N_n$ by at most one, 
\begin{equation}\p\lr\left\vert N_n-\frac{n}{aH(1..a)}\right\vert>\l\rr\le2\exp\{-\l^2/2n\},\end{equation}
so that for fixed $a$, the number of waddles is concentrated in an interval of width $\sqrt{n\vp(n)}$ around its expected value -- which is of order $\Theta(n)$ -- where $\vp(n)$ may tend to infinity arbitrarily slowly.  How then can we get {\it significantly} fewer waddles than expected?  To fix our ideas, we recall from (3) that for $a=2$ we expect $(27/16)^k$ missing words if $n=3k$, the threshold value for the sequence to be omni, and $\e(M_{k,a})=(256/216)^k$ if $n=4k$.  These values are derived from the linearity of expectation, and provide little insight into what {\it causes} words to be missing, the correlations between the presence or absence of words, etc.  Now, setting $a=2$ and  $n=4k$ in (4), we see that for $k$ large enough,
\begin{equation}\p(N_n<k/2)\le\p(\vert N_n-1.33k\vert\ge0.83k)\preceq(0.916)^k.\end{equation} 
Now the actual probability of having a shortfall of $0.83k$ or more waddles is certainly smaller than that given by (5), but such a shortfall would, as discussed above, lead to $2^{{k/2}}$ missing words -- and, making believe that (5) is sharp, an expected value of at least $(\sqrt{2}\cdot0.916))^k\approx(1.3)^k$ for the number of missing words.  Now, we know this is false (the correct expected value is $(256/216)^k=(1.18)^k$) but we believe the above crude analysis {\it does} add value.

To give a more specific example, we compute the probability that a sequence of length $kaH(1..a)$ has fewer than $k-\sqrt{k}$ waddles.  By Theorem 5.1, this converges to some constant $B$, and leads to the conclusion that $\e(M_{k,a})\ge B\cdot a^{\sqrt{k}}$ which certainly tends to infinity.

Fleshing out the relationship between unaccomplished coupon collections and missing word counts clearly remains a key problem that warrants deeper further investigation.

\section{Applications}
We believe that omnisequences have a large number of potential applications.  Below are some of our thoughts on the matter.

\noindent\textbf{Cryptography:} Omnisequences could provide a potential method for cryptography.  For example, suppose that Alice and Bob meet and exchange one-time pads of randomly generated letters (or even an innocuous looking copy of {\it War and Peace}).  The encryption process for a message then becomes to greedily find the position of the desired letters within the pad.  For example, given a pad of ``abfpodod$\ldots$,'' the ciphertext of ``food'' would be ``3,5,7,8.''  The decryption process simply involves reading across the pad and recording the letters that appear in the relevant positions.  Notice that both the encryption and decryption process are exceedingly simple and require very little computational resources; more complicated schema can certainly be employed.  Our results show that if we want a random pad to be able to encrypt any message of length $k$, it should have length of at least $26 H(1..26)k \approx 100k$.  (Of course, a disadvantage of this cryptographic scheme is that only about $1\%$ of the letters in the pad will actually be used.)  This is essentially a variation of (or perhaps identical to) schemes that have actually been employed in the past.

\noindent\textbf{Randomness tests:} The results of the Coupon Collector problem have been used to analyze the randomness of data samples, such as in Kendall and Babington Smith~\cite{ke}.  It is conceivable that the related but distinct results we have derived for $k$-omnisequences could be applied to randomness tests.

\noindent\textbf{Derivation of identities:} Omnisequences are a combinatorial structure that provide for multiple ways of counting any one event.  In the process of doing this research, the authors stumbled upon a number of combinatorial identities, some perhaps not noticed before.  For example, in Lemma 6.1 it was shown that $$\displaystyle\sum_{i=k}^n \binom{n}{i}(a-1)^{n-i},$$ is equal to the total number of $n$-sequences not missing a word $T$, which can also be calculated as $$\displaystyle\sum_{i=k}^n \binom{i-1}{k-1}a^{n-i}(a-1)^{i-k}$$ as follows:  Let the $i$th element of $S$, $S_i$, be the first appearance in $S$ of the last letter, $T_k$, of $T$, given that letters $T_1,\ldots,T_{k-1}$ have appeared sequentially in $S$. Now choose the positions of the relevant terms of the subsequence in $\binom{i-1}{k-1}$ ways. Consider when $T_j$, appears in $S$; each subsequent term prior to $T_{j+1}$ in $S$ has $a-1$ choices, namely not $T_{j+1}$. The $n-i$ elements after $S_i$ have $a$ choices. Hence 
$$\displaystyle\sum_{i=k}^n \binom{i-1}{k-1}a^{n-i}(a-1)^{i-k} = \sum_{i=k}^n \binom{n}{i}(a-1)^{n-i}.$$

A second such identity can be derived by considering the total number of (minimal) 1-omnisequences of length $n$ on $[a]$.  First of all, we can construct such a sequence in the following manner. Let $\alpha_1 \alpha_2 \ldots \alpha_a$ be a permutation of $a$, denoting the order in which the letters first appear; the first letter in the omnisequence is thus $\a_1$ and the last is $\a_n$. The remaining $n-a$ letters can then be placed with restriction that the letters between $\alpha_i$ and $\alpha_{i+1}$ may acquire any of the values $\alpha_1,\alpha_2,\ldots,\alpha_i$.  We note that this construction will always yield a distinct 1-omnisequence, and furthermore every 1-omnisequence can be constructed in this way.  Now if there are $l_i-1$ letters, $l_i\geq 1$, between $\alpha_i$ and $\alpha_{i+1}$, then we note that we can create $1^{l_1-1}2^{l_2-1}\cdots (a-1)^{l_{a-1}-1}$ 1-omnisequences.  Furthermore, we had $a!$ ways of creating the original permutation.  Hence we calculate the total number of 1-omnisequences of length $n$ as $a!\sum_{l_1 + \ldots + l_{a-1} = n-1} 1^{l_1-1}2^{l_2-1}\cdots (a-1)^{l_{a-1-1}} = a\sum_{l_1 + \ldots + l_{a-1} = n-1} 1^{l_1}2^{l_2}\cdots (a-1)^{l_{a-1}}$.

Alternatively, we can consider fixing the last letter of our 1-omnisequence (which can be done in $a$ ways).  Suppose that we want to have $l_1, l_2,\ldots, l_{a-1}$ copies of each of the remaining first, second, $\ldots$, $a-1$st letters in our 1-omnisequence.  Since the arrangement of these letters is arbitrary, we have that there are $\binom{n-1}{l_1, l_2, \ldots, l_{a-1}}$ sequences we can construct in this way.  Again, this construction provides a distinct 1-omnisequence, and all 1-omnisequences of length $n$ can be constructed in this manner.  Thus we obtain that there are $a\sum_{l_1 + \ldots + l_{a-1} = n-1} \binom{n-1}{l_1, l_2, \ldots, l_{a-1}}$ such omnisequences.  Combining our results, we find that
$$\sum_{l_1 + \ldots + l_{a-1} = n-1} 1^{l_1}2^{l_2}\cdots (a-1)^{l_{a-1}} = \sum_{l_1 + \ldots + l_{a-1} = n-1} \binom{n-1}{l_1, l_2, \ldots, l_{a-1}}.$$

\noindent\textbf{Linguistics:} Note that in language, letters are not randomly distributed.  Rather, they follow some weighted distribution (even this is, of course, a simplified model of language).  We note that our results can be thus be extended to languages using a weighted version of the Coupon Collector problem, such as is provided by Hermann Von Schelling~\cite{sc}.  Using the frequencies of just letters and spaces, one can calculate the expected length of a 1-omnisequence in English is about 2250.  However, our experiments with various text samples have shown that this is very rarely achieved and the real value is probably more like 4000.  In any case, this provides for some very interesting analysis and could conceivably be put to work checking, say, the degree of relationship between two languages, or testing hypotheses regarding disputed authorship (e.g.~ William Shakespeare vs.~ Francis Bacon, or Christopher Marlowe, or Edward de Vere).

\noindent\textbf{Magic and fortune-telling:} Certainly omnisequences could form the backbone of a magic trick.  In one such scheme, the magician asks an audience member to secretly compose a sentence.  He or she then theatrically shows around a piece of text that is in fact $k$-omni for large $k$.  When the audience member reveals his or her sentence, the magician can then ``magically'' find the sentence encoded in the text.  The remarkable fact is that if the text is randomly generated, it only needs to be about 100 times as long as the desired sentence for the magician to be successful every time.  Alternatively, a fortune-teller could use this technique to generate any desired message in front of a client's eyes.
\section{Open Problems}
 Questions for further investigation have been mentioned throughout the paper, but here are a few others that we consider to be central.  

\noindent(i) What is the relationship between the number of waddles in a non-omnibus sequence and the number of missing $k$-words?

\noindent  (ii) Can we approximate the distribution ${\cal L}(M_{k,a}(S))$ of missing words in an $n$ string?  

\noindent (iii) What is the variance of $M_{k,a}(S)$?, 

\noindent and, last but certainly not least, 

\noindent(iv) What are the general properties of two dimensional $n\times n$ arrays over $[a]$ that contain all $k\times k$ arrays as submatrices? (we call such arrays ``omnimosaics.")
\section{Acknowledgment}  The research of all four authors was supported by NSF-REU Grant 0552730 and conducted at ETSU during the summer of 2008.  The research of Godbole was further supported at JHU by the Acheson J. Duncan Fund for the Advancement of Research in Statistics.

\end{document}